\documentclass[11pt,reqno]{article}
\usepackage{amsfonts, amsthm, amsmath, amssymb}
\usepackage[english]{babel}
\textwidth 14.5 cm

\theoremstyle{plain}


\newtheorem {theorem} {Theorem}
\newtheorem{proposition}{Proposition}
\theoremstyle{remark}

\DeclareMathOperator{\Dim}{Dim}

\title{A CLT for Plancherel representations of the infinite-dimensional
unitary group }

\author{Alexei Borodin and Alexey Bufetov}

\date{}

\begin{document}

\maketitle

\abstract{We study asymptotics of traces of (noncommutative) monomials
formed by images of certain elements of the universal enveloping algebra of the infinite-dimensional
unitary group in its Plancherel representations. We prove that they converge to
(commutative) moments of a Gaussian process that can be viewed as a collection of
simply yet nontrivially correlated two-dimensional Gaussian Free Fields. The limiting
process has previously arisen via the global scaling limit of spectra for
submatrices of Wigner Hermitian random matrices.

This note is an announcement, proofs will appear elsewhere. }

\section{Introduction}

Asymptotic studies of measures on partitions of representation theoretic origin
is a well-known and popular subject. In addition to its intrinsic importance in
representation theory, see e.g. \cite{BO-European} and references therein,
it enjoys close connections to the theory of random
matrices, interacting particle systems, enumerative combinatorics, and other domains,
for which it often provides crucial technical tools, cf. e.g.
\cite{Ok}, \cite{Olsh-Oxford}.

A typical scenario of how such measures arise is as follows: One starts
with a group with a well known list of irreducible representations, often parametrized
by partitions or related objects. Then a decomposition of a natural reducible representation
of this group on irreducibles provides a split of the total dimension of the
representation space into dimensions of the corresponding isotypical components;
their relative sizes are the weights of the measure. This procedure is well-defined
for finite-dimensional representations, but also for infinite-dimensional representations
with finite trace; the weight of (the label of) an isotypical component is then defined
as the trace of the projection operator onto it, provided that the trace is normalized
to be equal to 1 on the identity operator.

An alternative approach to measures of this sort consists in defining
averages with respect to such a measure for a suitable set of functions on labels
of the irreducible representations. These averages are obtained as traces of the
operators in the ambient representation space that are scalar in each of the
isotypical components. In their turn, the operators are images of central elements
in the group algebra of the group if the group is finite, or in the universal
enveloping algebra of the Lie algebra if one deals with a Lie group. The central
elements form a commutative algebra that is being mapped to the algebra of functions
on the labels, i.e., on partitions or their relatives. The value of the function
corresponding to a central element at a representation label is the (scalar) value
of this element in that representation.

While one may be perfectly satisfied with such an approach from probabilistic point of
view, from representation theoretic point of view it is somewhat unsettling that we
are able to only deal with commutative subalgebras this way, while the main interest
of representation theory is in noncommutative effects.

The goal of this work is go beyond this commutativity constraint.

More exactly, in a specific setting of the finite trace representations of the
infinite-dimensional unitary group described below, we consider a family of commutative
subalgebras of the universal enveloping algebra such that elements from different
subalgebras generally speaking do not commute. We further consider the limit
regime in which the measures for each of the commutative subalgebras are known
to approximate the two-dimensional Gaussian Free Field (GFF), see \cite{BorFer}.
We want to study the ``joint distribution'' of these GFFs for different subalgebras,
whatever this might mean.

For any element of the universal enveloping algebra, one can define its ``average''
as the trace of its image in the representation. Thus, having a representation,
we can define ``averages'' for arbitrary products of elements from our subalgebras,
despite the fact that the elements do not commute.

Our main result is that for certain {\it Plancherel\/} representations, these ``averages''
converge to actual averages of suitable
observables on a Gaussian process that consists of a family of explicitly correlated
GFFs. Thus, the original absence of commutativity in this limit disappears, and yet
the limiting GFFs that arise from different commutative subalgebras do not become
independent.

The same limiting object (the collection of correlated GFFs) has been previously
shown to be the universal global scaling limit for eigenvalues of
various submatrices of Wigner Hermitian random matrices, cf. \cite{Bor}.
We also expect it to arise from other, non-Plancherel factor representations of
the infinite-dimensional unitary group under appropriate limit transitions.

The present paper is an announcement, the proofs will appear in a subsequent publication.

\subsubsection*{Acknowledgements}
The authors are very grateful to Grigori Olshanski for numerous discussions that were
extremely helpful. A.~Borodin was partially supported by NSF grant DMS-1056390.
A.~Bufetov was partially supported by Simons Foundation-IUM scholarship, by Moebius
Foundation for Young Scientists, and by RFBR--CNRS grant 10-01-93114.

\section{Characters of unitary groups}\label{sc:2}
\label{puti}

Let $I$ be a finite set of natural numbers, and let $U(I) = (u_{ij})_{i,j \in I}$
be the group of unitary matrices whose rows and columns are marked by elements of $I$.
In what follows we denote $\{1, 2, \dots, N \}$ as $\overline{1,N}$.
Consider the tower of embedded unitary groups

\begin{equation*}
U(\{ 1 \}) \subset U( \{ 1,2 \}) \subset \dots U(\overline{1,N} ) \subset U(\overline{1,N+1}) \subset \dots,
\end{equation*}
where the embedding $U(\overline{1,k}) \subset U(\overline{1,k+1})$ is defined by
$u_{i,k+1}=u_{k+1,i}=0$, $1 \le i \le k$, $u_{k+1,k+1}=1$.
\emph{The infinite--dimensional unitary group } is the union of these groups:

\begin{equation*}
U(\infty) = \bigcup_{N=1}^{\infty} U( \overline{1,N}).
\end{equation*}

A \emph{signature} (also called \emph{highest weight}) of length $N$ is a sequence of $N$ weakly
decreasing integers $\lambda_1 \ge \lambda_2 \ge \dots \ge \lambda_N$.
Let $\mathbb {GT}_N$ denote the set of such signatures. (Here the letters $\mathbb{GT}$ stand for
`Gelfand-Tsetlin'.)
We say that $\lambda \in \mathbb {GT}_N$ and $\mu \in \mathbb{GT}_{N-1}$ \emph{interlace},
notation $\mu \prec \lambda$, iff $\lambda_i \ge \mu_i \ge \lambda_{i+1}$ for any $1 \le i \le N-1$. We also define $\mathbb{GT}_0$ as a singleton consisting of an element that we denote as
$\varnothing$. We assume that $\varnothing\prec \lambda$ for any $\lambda\in \mathbb{GT}_1$.

The \emph{Gelfand-Tsetlin graph} $\mathbb {GT}$ is defined by specifying its set of vertices as
$\bigcup_{N=0}^{\infty} \mathbb{GT}_N $ and putting an edge between any two signatures $\lambda$ and $\mu$ such that
either $\lambda \prec \mu$ or $\mu \prec \lambda$.
A \emph{path} between signatures $\kappa \in \mathbb {GT}_K$ and $\nu \in \mathbb {GT}_N$, $K<N$ is a sequence
\begin{equation*}
\kappa = \lambda^{(K)} \prec \lambda^{(K+1)} \prec \dots \prec \lambda^{(N)} = \nu, \ \ \  \lambda^{(i)} \in \mathbb{GT}_i.
\end{equation*}
Let $\Dim_N (\nu)$ be the number of paths between $\varnothing$ and $\nu \in \mathbb {GT}_N$. An \emph{infinite path} is a sequence
\begin{equation*}
\varnothing\prec \lambda^{(1)} \prec \lambda^{(2)} \prec \dots \prec \lambda^{(k)} \prec \lambda^{(k+1)} \prec \dots.
\end{equation*}
We denote by $\mathcal P$ the set of all such paths. It is a topological space with
the topology induced from the product topology on the ambient product of discrete sets
$\prod_{N\ge 0}\mathbb{GT}_N$.

For $N=0,1,2,\dots$, let $ M_N$ be a probability measure on $\mathbb {GT}_N$. We say that
$\{ M_N \}_{N=0}^{\infty}$ \emph{is a coherent system of measures} if for any $N\ge 0$
and $\lambda\in\mathbb{GT}_N$,
\begin{equation*}
M_{N} (\lambda) = \sum_{\nu : \lambda \prec \nu} M_{N+1} (\nu)\, \frac{\Dim_{N} (\lambda)}{\Dim_N (\nu)}.
\end{equation*}

Given a coherent system of measures $\{ M_n \}_{n=1}^{\infty}$, define a weight of a
cylindric set of $\mathcal P$ consisting of all paths with prescribed members up to $\mathbb{GT}_N$
by
\begin{equation}
\label{mera-puti}
P ( \lambda^{(1)}, \lambda^{(2)}, \dots, \lambda^{(N)} ) = \frac{ M_N (\lambda^{(N)})}{\Dim_N (\lambda^{(N)} )}.
\end{equation}
Note that this weight depends on $\lambda^{(N)}$ only. The coherency property implies that
these weights are consistent, and they correctly define a Borel probability measure on $\mathcal P$.

It is well known that the irreducible (complex) representations of $U(N)=U(\overline{1,N})$ can be parametrized by signatures of length $N$, and $\Dim_N (\lambda)$ is the dimension of the representation corresponding to $\lambda$. Let $\chi^{\lambda}$ be the conventional character
of this representation (i.e., the function on the group obtained by evaluating trace of the
representation operators) divided by $\Dim_N (\lambda)$.

Define a \emph{character} of the group $U(\infty)$ as a function
$\chi : U(\infty) \to \mathbb C$ that satisfies

1) $\chi(e)=1$, where $e$ is the identity element of $U(\infty)$ (normalization);

2) $\chi(g h g^{-1}) = \chi (h)$, where $g,h$ are any elements of $U(\infty)$ (centrality);

3) $\chi( g_i g_j^{-1})_{i,j=1}^n$ is an Hermitian and positive-definite matrix for
any $n\ge 1$ and $g_1, \dots, g_n \in U(\infty)$ (positive-definiteness);

4) the restriction of $\chi$ to $U(\overline{1,N})$ is a continuous function for any $N\ge 1$
(continuity).

Let $\chi$ be a character of $U(\infty)$. It turns out that for any $N\ge 1$, its restriction to $U(N)$ can be decomposed into a series in $\chi^{\lambda}$,
\begin{equation*}
{\chi |}_{U(N)} = \sum_{\lambda \in \mathbb{GT}_N} M_N (\lambda) \chi^{\lambda},
\end{equation*}
and the coefficients $M_N (\lambda)$ form a coherent system of measures on $\mathbb {GT}$.
Conversely, for any coherent system of measures on $\mathbb{GT}$ one can construct a character
of $U(\infty)$ using the above formula.

The space of characters of $U(\infty)$ is obviously convex. The extreme points of this
set can be viewed as traces of the factor-representations of $U(\infty)$ with finite trace,
or as spherical functions for irreducible spherical unitary representations of the Gelfand pair
$(U(\infty) \times U(\infty), \mathrm{diag} (U(\infty)))$, see \cite{Olsh1} for details.
The classification of the extreme characters is known as Edrei--Voiculescu theorem, see
\cite{OkoOls} and references therein.

\section{Characters and states on the universal enveloping algebra}\label{sc:3}

Let $\mathfrak{gl} (I) = (g_{ij})_{i,j \in I}$ be the complexified
Lie algebra of $U(I)$, let $\mathcal U ( \mathfrak{gl} (I) )$ be its universal enveloping
algebra, and let $Z( \mathfrak{gl} (I) )$ be the center of
$\mathcal U ( \mathfrak{gl} (I) )$.
Denote by
$$
\mathcal U (\mathfrak{gl} (\infty))=
\bigcup_{N\ge 1} \mathcal U ( \mathfrak{gl} (\overline{1,N}) )
$$
the universal enveloping algebra of $\mathfrak{gl} (\infty)$.

There exists a canonical isomorphism
\begin{equation*}
D_I : \mathcal U (\mathfrak{gl} (I) ) \to \mathcal D (I),
\end{equation*}
where $\mathcal D (I)$ is the algebra of left-invariant differential operators on $U(I)$
with complex coefficients. Let $\{ x_{ij} \}$ be the matrix coordinates.
For any character $\chi$ define a \emph{state} on $\mathcal U( \mathfrak{gl} (\infty) )$
as follows: For any
$X \in \mathcal U( \mathfrak{gl} (\infty) )$
\begin{equation}
\label{sost}
\langle X \rangle_{\chi} = D_I (X) \chi (x_{ij}) |_{x_{ij}=\delta_{ij}}, \ \ \ X \in \mathcal U (\mathfrak{gl} (I) ).
\end{equation}
Note that this definition is consistent for different choices of $I$.

We shall denote coordinates of the signatures that parametrize irreducible representations of
$U(I)$ as $\lambda_1^{(I)}, \dots, \lambda_{|I|}^{(I)}$.
There exists a canonical isomorphism  $Z(\mathfrak{gl} (I)) \to \mathbb A(I)$,
where $\mathbb A (I)$ is the algebra of shifted symmetric polynomials in
$\lambda_1^{(I)}, \dots, \lambda_{|I|}^{(I)}$, see e.g. \cite{OkoOlsh}. For any central element,
the value of the corresponding function at a signature corresponds to the (scalar) operator
that this element turns into in the corresponding representation.

Similarly to Section \ref{puti}, restricting $\chi$ to $U(I)$ gives rise to a probability
measure on signatures of length $|I|$. One shows that the state $\langle \,\cdot\,\rangle_{\chi}$
on an element of $Z( \mathfrak{gl} (I))$ equals  the expectation of the corresponding
function in $\mathbb A(I)$ with respect to this probability measure.

One can also evaluate $\langle \,\cdot\,\rangle_{\chi}$ as an expectation on a larger probability
space. Consider $ \{1\}\subset\{1,2\}\subset\ldots \subset \overline{1,k} \subset \dots$.
Let $\mathcal Z$ be the subalgebra in $\mathcal U(\mathfrak{gl} (\infty) )$ generated by
all the centers $Z(\mathfrak{gl} (\overline{1,k}))$, $k=1,2, \dots$. To any element
of $Z(\mathfrak{gl} (\overline{1,k}) )$ we assign a function on the path space $\mathcal P$ by
taking the length $k$ member of the path and applying the isomorphism
$Z(\mathfrak{gl} (\overline{1,k})) \to \mathbb A(\overline{1,k})$. Hence, the
algebra $\mathcal Z$ is naturally embedded into functions on $\mathcal P$.
Denote the probability measure on $\mathcal P$ that arises from the coherent system of measures
originated from $\chi$ by $\mu_{\chi}$.
Then the value of $\langle\, \cdot\, \rangle_{\chi}$ on any element of $\mathcal Z$ is equal to the
expectation of the corresponding function on the probability space $(\mathcal P, \mu_{\chi})$.

\section{One-sided Plancherel characters}

In what follows we restrict ourselves to the
\emph{one--sided Plancherel character} with a growing parameter.
This extreme character is defined as
\begin{equation}
\label{char}
\chi (U) = \exp \left(\gamma L \sum_{i=1}^{\infty} (x_{ii}-1) \right),
\end{equation}
where $U=[x_{ij}]_{i,j\ge 1}$, $\gamma$ is fixed, and $L$ is the growing parameter.
Denote by $\mu_{\gamma}$ the probability measure on $\mathcal P$ that corresponds to this character
(cf. Section \ref{sc:2}),
and denote by $\langle \,\cdot\, \rangle_{\gamma}$ the state that corresponds to this character (cf. Section
\ref{sc:3}).

If one restricts \eqref{char} to $U(I)$ and decomposes it on normalized conventional characters of
$U(I)$, one obtains a probability measure on signatures of length $|I|$ of the form
\begin{equation}
\label{prec}
P^{\gamma L}_I (\lambda) := \begin{cases}
e^{- \gamma L |I|} \dfrac{ (\gamma L)^{\lambda_1 + \dots + \lambda_{|I|}}}{(\lambda_1 + \dots + \lambda_{|I|})!} \dim \lambda
\Dim_{|I|} \lambda, &\lambda_1 \ge \dots \ge \lambda_{|I|} \ge 0; \\
0, &\text{otherwise},
\end{cases}
\end{equation}
where $\dim \lambda$ is the dimension of the irreducible representation of the symmetric group $S_{|\lambda|}$
that corresponds to $\lambda$ (= the number of standard Young tableaux of shape $\lambda$).
Observe that these probability measures are supported by nonnegative signatures, i.e., on partitions
or Young diagrams.

Asymptotic properties of $P_{\overline{1,L}}^{\gamma L}$ as $L \to \infty$
and related distributions have been extensively studied in
\cite{Biane}, \cite{BorFer}, \cite{BorOlsh}, \cite{Meliot}, \cite{Mkr}.

\section{Random height functions and GFF}

A {\it Gaussian family} is a collection of Gaussian random variable $\{ \xi_a \}_{a \in \Upsilon}$
indexed by an arbitrary set $\Upsilon$. We assume that all the random variable are centered, i.e.

\begin{equation*}
\mathbf E \xi_a = 0, \ \ \ \mbox{ for any } a \in \Upsilon.
\end{equation*}
Any Gaussian family gives rise to the \emph{covariance kernel}
$Cov : \Upsilon \times \Upsilon \to \mathbb R$ defined (in the centered case) by
\begin{equation*}
Cov (a_1, a_2) = \mathbf E ( \xi_{a_1} \xi_{a_2} ).
\end{equation*}

Assume that a function $\tilde C : \Upsilon \times \Upsilon \to \mathbb R$
is such that for any $n\ge 1$ and $a_1, \dots, a_n \in \Upsilon$,
$[\tilde C (a_i,a_j)]_{i,j=1}^{n}$ is a symmetric and positive-definite matrix.
Then (see e.g. \cite{Car}) there exists a centered Gaussian family with the covariance
function $\tilde C$.

Let $\mathbb H := \{ z \in \mathbb C : \mathfrak I (z) >0 \}$ be the upper half-plane, and
let $C_0^\infty$ be the space of smooth real--valued compactly supported test functions on
$\mathbb H$. Define a function $C : C_0^\infty \times C_0^\infty \to \mathbb R$ via

\begin{equation*}
C (f_1, f_2) := \int_{\mathbb H} \int_{\mathbb H} f_1 (z) f_2 (w) \left( -\frac{1}{2 \pi} \ln \frac{z-w}{z - \bar w} \right)
dz d \bar z dw d \bar w.
\end{equation*}

The \emph{Gaussian Free Field} (GFF) $\mathfrak G$ on $\mathbb{H}$ with zero boundary conditions can be
defined as a Gaussian family $\{ \xi_f \}_{ f \in C_0^\infty}$ with covariance kernel $C$.
The field $\mathfrak G$ cannot be defined as a random function on $\mathbb H$,
but one can make sense of the integrals $\int f(z) \mathfrak G(z) dz$ over smooth finite contours
in $\mathbb{H}$ with continuous functions $f(z)$, cf. \cite{She}.

Define the \emph{height function}

\begin{equation*}
H : \mathbb R_{\ge 0} \times \mathbb R_{\ge 1} \times \mathcal P \to \mathbb N
\end{equation*}
as
\begin{equation*}
H (x,y) = \sqrt{\pi}\,  \left|\left\{i\in\overline{1,[y]} : \lambda_i^{(y)}  - i + 1/2 \ge x \right\}\right|,
\end{equation*}
where $\lambda_i^{(y)}$ are the coordinates of the signature of length $[y]$ from the
infinite path.
If we equip $\mathcal P$ with a probability measure $\mu_{\gamma}$ then
$H(x,y)$ becomes a random function describing a certain random
stepped surface, or a random lozenge tiling of the half-plane, see \cite{BorFer}.

Define $x(z), y(z) : \mathbb H \to \mathbb R$ via
\begin{equation*}
x(z) = \gamma (1 - 2 \mathfrak R (z)), \ \ y(z) = \gamma |z|^2.
\end{equation*}
Let us carry $H(x,y)$ over to $\mathbb H$ --- define
\begin{equation*}
H^{\Omega} (z) = H ( L x(z), L y(z) ) , \ \ z \in \mathbb H.
\end{equation*}
It is known, cf. \cite{Biane}, \cite{BorFer}, that there exists a limiting (nonrandom) height function
\begin{equation*}
\tilde h (z) := \lim_{L \to \infty} \frac {\mathbf E H^{\Omega} (z) } {L}, \ \ \ z \in \mathbb H,
\end{equation*}
that describes the limit shape. The fluctuations around the limit shape were studied in
\cite{BorFer}, where it was shown that the fluctuation field
\begin{equation}
\mathcal H (z):= H^{\Omega} (z) - \mathbf E H^{\Omega} (z) , \ \ z \in \mathbb H,
\end{equation}
converges to the GFF introduced above.

In \cite{BorFer}, this convergence was proved for a certain space of test functions.
Let us formulate a similar statement that we prove in this work, and that utilizes
a different space of test functions.

Define a moment of the random height function via
\begin{equation*}
M_{y,k} := \int_{-\infty}^{\infty} x^k \bigl(H ( Lx, Ly) - \mathbf E H (Lx, Ly) \bigr) dx.
\end{equation*}
Also define the corresponding moment of the GFF as
\begin{equation*}
\mathcal M_{y,k} = \int_{z \in \mathbb H; y = \gamma |z|^2} x(z)^k \mathfrak G(z) \frac{d x(z)}{dz } dz.
\end{equation*}

\begin{proposition}
As $L \to \infty$, the collection of random variables
$ \{ M_{y,k} \}_{y >0, k \in \mathbb Z_{\ge 0}}$ converges, in the sense
of finite dimensional distributions, to $\{ \mathcal M_{y,k} \}_{y >0, k \in \mathbb Z_{\ge 0}}$.

\end{proposition}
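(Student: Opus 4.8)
\emph{Strategy.} The plan is to proceed by the method of moments, after first re-expressing each $M_{y,k}$ as a centered power sum of the Plancherel particle configuration at level $[Ly]$. Set $n=[Ly]$ and let $a_i=\lambda_i^{(n)}-i+\tfrac12$ be the particle coordinates of the level-$n$ signature of a path, so that $H(Lx,Ly)=\sqrt\pi\,\#\{i:a_i/L\ge x\}$. Since the measures \eqref{prec} live on partitions, $a_n\ge -n+\tfrac12\ge -Ly$, so the integrand in the definition of $M_{y,k}$ vanishes identically for $x\le -y-1$; together with a moment bound on the rightmost particle $a_1$ that is uniform in $L$ and read off from \eqref{prec}, this lets one apply Fubini to obtain
\begin{equation*}
M_{y,k}=\frac{\sqrt\pi}{k+1}\,L^{-(k+1)}\sum_{i=1}^{[Ly]}\Bigl[\bigl(\lambda_i^{([Ly])}-i+\tfrac12\bigr)^{k+1}-\mathbf E\bigl(\lambda_i^{([Ly])}-i+\tfrac12\bigr)^{k+1}\Bigr].
\end{equation*}
Thus $M_{y,k}=\mathfrak p_{y,k+1}-\mathbf E\,\mathfrak p_{y,k+1}$, where $\mathfrak p_{y,m}$ is a rescaled $m$-th power sum of the particle coordinates at level $[Ly]$; as a function on $\mathcal P$ it is a shifted symmetric polynomial in $\lambda^{([Ly])}$, hence an element of the algebra $\mathcal Z$ of Section \ref{sc:3}.

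\emph{Joint moments as states.} Fix pairs $(y_1,k_1),\dots,(y_r,k_r)$. Since the proposed limit is Gaussian, by the method of moments it is enough to show that $\mathbf E\bigl[\prod_j M_{y_j,k_j}\bigr]$ converges, that the limiting second joint cumulants equal $\mathbf E[\mathcal M_{y_a,k_a}\mathcal M_{y_b,k_b}]$, and that all joint cumulants of order $\ge 3$ tend to $0$. Each product $\prod_j\mathfrak p_{y_j,k_j+1}$ again lies in $\mathcal Z$, so by the discussion at the end of Section \ref{sc:3} its $\mu_\gamma$-expectation equals the value of $\langle\,\cdot\,\rangle_\gamma$ on the corresponding element of $\mathcal U(\mathfrak{gl}(\infty))$ --- a noncommutative monomial of precisely the type this paper is about. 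These states are evaluated through the explicit Plancherel character \eqref{char} and the isomorphism $D_I$; on the side of the particle system this amounts to the known determinantal structure of the induced measure on Gelfand--Tsetlin patterns and its slices, together with its double-contour-integral correlation kernel.

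\emph{Asymptotics and the limiting covariance.} Feeding the contour-integral formulas into the connected joint moments of the $\mathfrak p$'s and carrying out a steepest-descent (saddle-point) analysis, one sees that every joint cumulant of order $\ge 3$ carries a strictly negative power of $L$ and hence vanishes, while the covariances converge. For a single level ($y_a=y_b$) the limiting covariance is the one computed in \cite{BorFer}. In general, after the change of variables $x=\gamma(1-2\mathfrak R z)$, $y=\gamma|z|^2$ the limit of the covariance of $\mathfrak p_{y_a,k_a+1}$ and $\mathfrak p_{y_b,k_b+1}$ becomes the double integral of $x(z_1)^{k_a}x(z_2)^{k_b}\bigl(-\tfrac1{2\pi}\ln\tfrac{z_1-z_2}{z_1-\bar z_2}\bigr)$ against $\tfrac{dx(z_1)}{dz_1}\tfrac{dx(z_2)}{dz_2}\,dz_1\,dz_2$ over the arcs $\{\gamma|z|^2=y_a\}$ and $\{\gamma|z|^2=y_b\}$ in $\mathbb H$, which by the definition of $\mathfrak G$ is exactly $\mathbf E[\mathcal M_{y_a,k_a}\mathcal M_{y_b,k_b}]$; the emergence of the kernel $-\tfrac1{2\pi}\ln\tfrac{z_1-z_2}{z_1-\bar z_2}$ out of the discrete correlation kernel is the standard computation of \cite{Bor}, \cite{BorFer}. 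Combining the three points yields convergence of finite-dimensional distributions.

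\emph{Main obstacle.} The heart of the argument is the saddle-point analysis just described, specifically for the \emph{mixed-level} joint moments of the power sums: bounding the higher cumulants and extracting the precise cross-level covariance, the same-level case being already contained in \cite{BorFer}. A secondary but genuine technical issue is the first step: the rightmost particle is only $O(L)$ with high probability, not deterministically, so one needs $L$-uniform tail estimates from \eqref{prec} both to justify the improper integral in the definition of $M_{y,k}$ and to upgrade moment convergence to convergence in distribution.
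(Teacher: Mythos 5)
The paper itself contains no proof of this proposition (it is an announcement, with proofs deferred to a later publication), but your argument follows precisely the route the paper sets up: integration by parts converts $M_{y,k}$ into the centered shifted power sums $\frac{L^{-(k+1)}\sqrt\pi}{k+1}\left(p_{k+1,I}-\mathbf E\, p_{k+1,I}\right)$, regarded as elements of the algebra $\mathcal Z$ whose expectations under $\mu_\gamma$ coincide with the state $\langle\,\cdot\,\rangle_\gamma$, and the Gaussian limit is then extracted by a moment/cumulant analysis of the determinantal correlation structure as in \cite{BorFer}, \cite{Bor} — which is exactly the reformulation and strategy the paper indicates (leading to its Theorems), so your proposal is consistent with it, with the understanding that the central steepest-descent estimates are cited rather than carried out, just as in the paper. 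Two minor corrections: moment convergence to a Gaussian (moment-determinate) limit already yields convergence of finite-dimensional distributions, so the $L$-uniform tail bounds are needed only to make sense of the improper integral defining $M_{y,k}$ and the finiteness of its moments, not to ``upgrade'' the convergence; and the single-level case is not literally contained in \cite{BorFer}, since the paper stresses that the present statement uses a different space of test functions, even though the kernel asymptotics underlying it are the same.
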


\section{Convergence in the sense of states}
\label{algebra}

Consider a probability space $\Omega$ and a sequence of $k$-dimensional random variables
$( \eta_n^1, \eta_n^2, \dots, \eta_n^k )_{n\ge 1}$ on it that converge, in the sense
of convergence of moments, to a Gaussian random vector $(\eta^1, \dots, \eta^k )$
with zero mean. If we define a \emph{state} as
\begin{equation*}
\langle \xi \rangle_{\Omega} := \mathbf E \xi, \qquad \xi \in L^1 (\Omega),
\end{equation*}
then this convergence can be reformulated as
\begin{multline}
\label{Wick}
\langle \eta_n^{i_1} \eta_n^{i_2} \dots \eta_n^{i_l} \rangle_{\Omega} \xrightarrow[n \to \infty]{}
\sum_{\sigma \in \mathcal{PM}} \prod_{j=1}^{l/2} \langle \eta^{\sigma(2j-1)} \eta^{\sigma(2j)} \rangle_{\Omega},
\\ \mbox{for any $l\ge 1$ and any} \  (i_1, \dots, i_l) \in \{1, \dots, k \}^l,
\end{multline}
where $\mathcal{PM}$ is the set of involutions on $\{ 1,2, \dots, l \}$ with no
fixed points, also known as \emph{perfect matchings}.
(In particular, $\mathcal{PM}$ is empty if $l$ is odd).
Indeed, Wick's formula implies that the right-hand side of \eqref{Wick} contains the
moments of $\eta$.

Let $\mathcal A$ be a $*$-algebra and $ \langle\, \cdot \,\rangle$ be a state
(=linear functional taking non\-ne\-ga\-ti\-ve values at elements of the form $a a^*$) on it.
Let $a_1, a_2, \dots, a_k\in\mathcal A$.

Assume that the elements $a_1, \dots, a_k$ and the state on $\mathcal A$ depend on
a large parameter $L$, and we also have a $*$-algebra $\mathbf A$ generated by elements
$\mathbf a_1, \dots, \mathbf a_k$ and a state $\phi$ on it. We say that
$(a_1, \dots, a_k)$ converge to $(\mathbf a_1, \dots, \mathbf a_k)$
\emph{in the sense of states} if

\begin{equation}
\label{shod}
\langle a_{i_1} a_{i_2} \dots a_{i_l} \rangle \xrightarrow[ L \to \infty]{} \phi( \mathbf a_{i_1} \dots \mathbf a_{i_l} ),
\end{equation}
and this holds for any $l \in \mathbb N$ and any index sets
$(i_1, i_2, \dots, i_l) \in \{1,2, \dots, k \}^l$.

We say that a collection $\{ a_i \}_{i \in \mathfrak J}\subset \mathcal A$ indexed by an arbitrary set
$\mathfrak J$ and depending on a large parameter $L$, converges in the sense of states to a collection
$\{ \mathbf a_i \}_{i \in \mathfrak J}\subset \mathbf A$ if \eqref{shod} holds for any
finite subsets of $\{ a_i \}_{i \in \mathfrak J}$ and corresponding subsets in
 $\{ \mathbf a_i \}_{i \in \mathfrak J}$.

In what follows, the algebra $\mathcal A$ is taken to be
$\mathcal U (\mathfrak{gl} (\infty))$, and the state is taken to be
$\langle \,\cdot\, \rangle_{\gamma}$, cf. \eqref{sost}, \eqref{char}.
The role of the limiting algebra $\mathbf A$ will be played by a commutative
algebra originated from a probability space.

\section{The main result}

Let $A=\{ a_n \}_{n \geq 1}$ be a sequence of pairwise distinct natural numbers.
Let $\mathcal P_A$ be a copy of the path space $\mathcal P$ corresponding to $A$.
Given $A$, we define the height function
\begin{equation*}
H_A : \mathbb R_{\ge 0} \times \mathbb R_{\ge 1} \times \mathcal P_A \to \mathbb N
\end{equation*}
по формуле
\begin{equation*}
H_A (x,y) = \sqrt{\pi}\left| \left\{i\in \overline{1,[y]} : \lambda_i^{ (\{ a_1, \dots, a_{[y]} \}) }  - i + 1/2 \ge x
\right\}\right|,
\end{equation*}
where $\lambda_i^{(\{ a_1, \dots, a_{[y]} \})}$ are the coordinates of
the length $[y]$ signature in the infinite path.
Under the probability measure $\mu_{\gamma}$ on $\mathcal P_A$,
$H_A(x,y)$ becomes a random function on the probability space $(\mathcal P_A, \mu_{\gamma})$.

Let $\{ A_i \}_{i \in \mathfrak J}$ be a family of sequences of pairwise distinct natural numbers,
indexed by a set $\mathfrak J$. Introduce the notation

\begin{equation*}
A_i=\{ a_{i,n} \}_{n \ge 1}, \ \ \
A_{i,m}=\{ a_{i,1}, \dots, a_{i,m} \}.
\end{equation*}

The coordinates $a_{i,j} = a_{i,j}(L)$ may depend on the large parameter $L$.

We say that $\{ A_i \}_{i \in \mathfrak J}$ is \emph{regular} if for any
$i,j \in \mathfrak J$ and any $x,y>0$ there exists a limit

\begin{equation}
\alpha(i,x; j,y) = \lim_{L \to \infty} \frac{|A_{i,[xL]} \cap A_{j,[yL]}|}{L}.
\end{equation}

For example, the following family is regular: $\mathfrak{J} = \{ 1,2,3,4 \}$ with $a_{1,n}=n$,
$a_{2,n}= 2n$, $a_{3,n}=2n+1$, and

\begin{equation*}
a_{4,n} = \begin{cases}
n+L, \ \ \ &n =1,2, \dots, L, \\
n-L, \ \ \ &n=L+1, L+2, \dots, 2L, \\
n, \ \ \ &n \ge 2L+1.
\end{cases}
\end{equation*}

Consider the union of copies of $\mathbb H$ indexed by $\mathfrak J$:
\begin{equation*}
\mathbb H(\mathfrak I): = \bigcup_{i \in \mathfrak I} \mathbb H_i.
\end{equation*}
Define a function
$C: \mathbb H(\mathfrak J) \times \mathbb H(\mathfrak J) \to \mathbb R \cup \{+\infty \} $
via
\begin{equation*}
C_{ij} (z,w) = \frac{1}{2 \pi} \ln \left|\frac{\alpha(i, |z|^2; j, |w|^2) - z w}{\alpha(i, |z|^2; j, |w|^2) - z \bar w}\right|,
\ \ \ \ i,j \in \mathfrak J,\quad z \in \mathbb H_i, w \in \mathbb H_j.
\end{equation*}

\begin{proposition}
For any regular family as above, there exists a generalized Gaussian process on
$\mathbb H( \mathfrak J)$ with the covariance kernel $C_{ij}(z,w)$. More exactly, for any finite
set of test functions $f_m(z) \in C_0^\infty (\mathbb H_{i_m})$ and $i_1, \dots, i_M \in \mathfrak J$,
the covariance matrix
\begin{equation}
cov( f_k,f_l) = \int_{\mathbb H} \int_{\mathbb H} f_k(z) f_l(w) C_{i_k i_l} (z,w) dz d \bar z dw d \bar w
\end{equation}
is positive-definite.
\end{proposition}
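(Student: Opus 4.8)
The plan is to deduce the assertion from the general criterion recalled above (from any symmetric, non\-negative\-definite kernel one can build a centered Gaussian family, cf.\ \cite{Car}); so it suffices to check that for every finite collection $f_1\in C_0^\infty(\mathbb H_{i_1}),\dots,f_M\in C_0^\infty(\mathbb H_{i_M})$ the matrix $\bigl[cov(f_k,f_l)\bigr]_{k,l=1}^{M}$ is symmetric and non\-negative\-definite. (I read ``positive-definite'' here in the weak sense; strict positivity is not needed for the construction, and in any case can be recovered for generic test functions, see the end.) Symmetry is clear from $C_{ij}(z,w)=C_{ji}(w,z)$, and the entries are finite because $C_{ij}$ has at worst a logarithmic singularity, located along the partial diagonal $\{z=w\}$ and only when $i=j$ (where $\alpha(i,|z|^2;i,|z|^2)=|z|^2$); hence $C_{ij}$ is locally integrable against the smooth compactly supported densities $f_k(z)f_l(w)$.

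For non\-negative definiteness I would realize $C_{ij}$ as a limit of genuine covariance kernels of jointly defined random variables. For a single family this is already available: if $A_i$ is a fixed sequence then $\alpha(i,x;i,y)=\min(x,y)$, and the elementary identity $|m-zw|\,|z-w|=|m-z\bar w|\,|z-\bar w|$ with $m=\min(|z|^2,|w|^2)$ gives $C_{ii}(z,w)=-\frac1{2\pi}\ln\bigl|\tfrac{z-w}{z-\bar w}\bigr|$, i.e.\ the restriction of the process to one copy $\mathbb H_i$ is the half-plane GFF, already produced by the Plancherel height functions in \cite{BorFer}. The difficulty is genuinely in the cross-correlations between non-nested families: for such families the central subalgebras $Z(\mathfrak{gl}(A_{i,m}))$ of $\mathcal U(\mathfrak{gl}(\infty))$ do not commute, so in the representation-theoretic picture there is no joint probability space at finite $L$ --- exactly the phenomenon this note is about --- and the joint realization must be found elsewhere.

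The joint realization is furnished by corners of a single Wigner Hermitian random matrix, which is how the same limiting object arises in \cite{Bor}. For $L$ large take a Wigner (say GUE) matrix $W$ of size exceeding all indices $a_{i,m}$ that occur at scale $L$; for each $i$ let $W^{(i)}_m:=(W_{ab})_{a,b\in A_{i,m}}$, and let $\mathcal H^{(i)}_L$ be the centered eigenvalue height function of $W^{(i)}_{[yL]}$ transported to $\mathbb H_i$ via $x(z),y(z)$ and normalized by $\sqrt\pi$ exactly as above, so that the level $|z|^2=y$ corresponds to the corner of size $[yL]$. All $\mathcal H^{(i)}_L$ are measurable functions of the single matrix $W$, so $K^{(L)}_{ij}(z,w):=\mathrm{Cov}\bigl(\mathcal H^{(i)}_L(z),\mathcal H^{(j)}_L(w)\bigr)$ is non\-negative\-definite for each $L$. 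By the global central limit theorem for eigenvalues of submatrices of Wigner matrices (\cite{Bor}), $K^{(L)}_{ij}$ converges, locally away from the partial diagonal and with a uniform local logarithmic bound, to a kernel that depends on $W^{(i)}_{[yL]}$ and $W^{(j)}_{[y'L]}$ only through the asymptotic size $\lim_L|A_{i,[yL]}\cap A_{j,[y'L]}|/L$ of their overlap; by the regularity hypothesis this is $\alpha(i,y;j,y')$, and matching the closed form of the limiting covariance with the present normalizations identifies it with $C_{ij}(z,w)$. Integrating against $f_k(z)f_l(w)$ and passing to the limit by dominated convergence then exhibits $\bigl[cov(f_k,f_l)\bigr]$ as a limit of non\-negative\-definite matrices, hence as a non\-negative\-definite matrix, and the criterion above produces the generalized Gaussian process. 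If strict positive-definiteness is wanted, it follows from nondegeneracy: restricted to a single index $i$ and a single level $y_0$, $C_{ii}$ is the Green's function of a nondegenerate one-dimensional Gaussian field (by the computation above, the half-plane GFF Green's function along the arc $\{|z|^2=y_0\}$), which is strictly positive-definite, and a standard perturbation argument then excludes degeneracy for generic distinct test functions.

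The main obstacle is the identification in the previous step: one must match the limiting covariance produced by the Wigner-corners CLT --- after the successive changes of variables $z\mapsto(x(z),y(z))$, the $\sqrt\pi$ normalization, and the passage from empirical eigenvalue measures to height functions --- with the explicit formula for $C_{ij}$, and verify that under the regularity hypothesis the only feature of the index sets $A_i$ surviving in the limit is the family of overlap densities $\alpha(i,x;j,y)$ (so that, in particular, the limit is insensitive to the order in which the $a_{i,n}$ are listed, beyond what $\alpha$ records). A secondary, purely technical point is the behaviour near the partial diagonal: one needs the convergence $K^{(L)}_{ij}\to C_{ij}$ to hold with a dominating local logarithmic bound uniform in $L$, so that it may be integrated against the test functions.
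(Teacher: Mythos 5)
Your proposal is correct in outline, but it is worth comparing it with what the paper actually does: the paper gives no argument at all here, it simply cites \cite{Bor}, Proposition~1, where the existence of the Gaussian process with covariance $C_{ij}$ is established once and for all. Your route is to re-derive that statement from the pre-limit picture of \cite{Bor}: realize all the fields jointly as eigenvalue height functions of corners $(W_{ab})_{a,b\in A_{i,[yL]}}$ of one Wigner matrix, observe that the finite-$L$ covariance matrices of the smoothed fields are automatically nonnegative-definite, and pass to the limit using the covariance asymptotics of \cite{Bor} together with the regularity hypothesis (which guarantees that only the overlap densities $\alpha$ survive). This is logically sound and, importantly, not circular: you only need convergence of second moments of the pre-limit fields, not the full CLT of \cite{Bor} (whose statement presupposes the very existence result being proved), and you correctly identify $C_{ii}$ with the GFF kernel via $\alpha(i,x;i,y)=\min(x,y)$. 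What your route costs, compared with the paper's one-line citation, is exactly the technical content you flag yourself: convergence of $K^{(L)}_{ij}$ to $C_{ij}$ with a dominating local logarithmic bound uniform in $L$ so that it can be integrated against $f_k(z)f_l(w)$, and the bookkeeping identification of normalizations --- none of which you carry out, and all of which is in effect the content of \cite{Bor}. Two smaller points: the diagonal logarithmic singularity is not confined to $i=j$ (it also occurs for $i\neq j$ whenever $\alpha(i,|z|^2;j,|z|^2)=|z|^2$, e.g.\ for sequences that eventually coincide), though this does not harm local integrability; and your closing ``standard perturbation argument'' for strict positive-definiteness of $[cov(f_k,f_l)]$ is not a proof --- but since only nonnegative-definiteness is needed to invoke the construction of \cite{Car}, and the paper's usage of ``positive-definite'' is in that weak sense, this does not affect the conclusion.
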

\begin{proof} See \cite[Proposition 1]{Bor} .
\end{proof}

Let us denote this Gaussian process as $\mathfrak G_{ \{ A_i \}_{i \in \mathfrak J} }$.
Its restriction to a single half-plane $\mathbb H_i$ is the GFF introduced above because
\begin{equation*}
C_{ii} (z,w) = - \frac{1}{ 2 \pi} \ln \left| \frac{z-w}{z - \bar w} \right|, \qquad z,w \in \mathbb H_i, \quad i\in \mathfrak J.
\end{equation*}
As before, let us carry $H_A(x,y)$ over to $\mathbb H$ --- define
\begin{equation*}
H_A^{\Omega} (z) = H_A ( L x(z), L y(z) ) , \ \ z \in \mathbb H.
\end{equation*}
As was mentioned above, the fluctuations
\begin{equation}
\label{imp}
\mathcal H_i (z) := H_{A_i}^{\Omega} (z) - \mathbf E H_{A_i}^{\Omega} (z) , \qquad i \in \mathfrak J, \ z \in \mathbb H_i,
\end{equation}
for any fixed $i$ converge to the GFF.

The main goal of this paper is to study the \emph{joint} fluctuations \eqref{imp} for different
$i$. The joint fluctuations of $\mathcal H_i$ are understood as follows.
Define the moments of the random height function as
\begin{equation}
\label{moment}
M_{i,y,k} := \int_{-\infty}^{\infty} x^k \bigl(H_{A_i}( Lx, Ly) - \mathbf E H_{A_i} (Lx, Ly) \bigr) dx.
\end{equation}

It turns out that the function $M_{i,y,k}$ belongs to $\mathbb A (A_{i, [Ly]})$, and thus it corresponds
to an element of $Z(\mathfrak {gl} (A_{i, [Ly]} ))$; denote this element by the same symbol.
Note that all such elements  $M_{i,y,k}$ for all $i,y,k$ belong to the ambient algebra
$\mathcal U (\mathfrak{gl} (\infty))$, and we also have the state $\langle \,\cdot\, \rangle_{\gamma}$ defined
on this ambient algebra. Thus, we can talk about convergence of such elements in the sense of states,
see Section \ref{algebra}.
We are interested in the limit as $L \to \infty$.

We prove that the family $\{ \mathcal H_i \}_{i \in \mathfrak{J} }$ converges to the generalized
Gaussian process $\mathfrak G_{ \{ A_i \} _{ i \in \mathfrak J } }$.
Define the moments of $\mathfrak G_{ \{ A_i \}_{i \in \mathfrak J}}$ by
\begin{equation*}
\mathcal M_{i,y,k} = \int_{z \in \mathbb H; y = \gamma |z|^2} x(z)^k \mathfrak G_{A_i} (z) \frac{d x(z)}{dz } dz.
\end{equation*}

\begin{theorem}\label{th:1}
As $L \to \infty$, the moments
$\{M_{i,y,k}\}_{i \in \mathfrak J, y>0, k \in \mathbb Z_{\ge 0}}$ converge in the sense of states to the
moments $\{ \mathcal M_{i,y,k} \}_{i \in \mathfrak J, y>0, k \in \mathbb Z_{\ge 0}}$.
\end{theorem}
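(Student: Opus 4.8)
The plan is to reduce the statement about noncommutative monomials in the $M_{i,y,k}$ to a genuinely commutative Gaussian computation, by exploiting the structure of the Plancherel measure and the fact that each individual $M_{i,y,k}$ lives in a single center $Z(\mathfrak{gl}(A_{i,[Ly]}))$. First I would recall the explicit description of the action of the relevant central elements in $Z(\mathfrak{gl}(\overline{1,k}))$ — the images under the Harish-Chandra isomorphism of the power sums $p_n^{(k)} = \sum_{j=1}^k \bigl((\lambda_j - j + \tfrac12)^n - (-j+\tfrac12)^n\bigr)$, or equivalently the moments of the counting measure $\sum_j \delta_{\lambda_j - j + 1/2}$ — and express $M_{i,y,k}$ in these terms, so that a monomial $M_{i_1,y_1,k_1}\cdots M_{i_l,y_l,k_l}$ becomes a product of such shifted-symmetric-polynomial operators sitting in \emph{different} but nested/overlapping subalgebras $\mathcal U(\mathfrak{gl}(A_{i_m,[Ly_m]}))$.

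The key step is to evaluate $\langle M_{i_1,y_1,k_1}\cdots M_{i_l,y_l,k_l}\rangle_\gamma$ using the branching/coherency structure. The point is that the state $\langle\,\cdot\,\rangle_\gamma$ restricted to a product of elements from $Z(\mathfrak{gl}(I_1)), \ldots, Z(\mathfrak{gl}(I_l))$ with $I_1 \subseteq I_2 \subseteq \cdots$ (after reordering so that the row-sets are nested — which one can always reduce to by enlarging the sets and using that $\alpha$ controls the overlaps) equals an iterated conditional expectation over the path space $(\mathcal P, \mu_\gamma)$: this is exactly the content of the last paragraph of Section~\ref{sc:3}. For Plancherel measure the relevant conditional distributions are explicit (Schur/Plancherel dynamics), and the multivariate moment generating function of the collection $\{p_n^{(k)}\}$ has a known asymptotic expansion: the leading ($L^{k+1}$-type) term gives the deterministic limit shape $\tilde h$, which is subtracted off in the definition of $M_{i,y,k}$, and the next-order term is Gaussian. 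I would organize this via the moment method: show that the centered, unnormalized moments $\langle \prod_m M_{i_m,y_m,k_m}\rangle_\gamma$ satisfy the Wick relation \eqref{Wick} in the limit, with pair covariances converging to $\mathbf E(\mathcal M_{i,y,k}\mathcal M_{j,y',k'})$. The two-point function is computed by a contour-integral/steepest-descent analysis of the double generating function, and here the cross-terms between different $A_i$'s produce precisely the kernel $C_{ij}(z,w)$ of Proposition (the $\alpha(i,\cdot;j,\cdot)$ enters through how far the two nested row-sets agree). This is essentially the same computation as in \cite{BorFer} for a single $i$, now tracked jointly; the single-$i$ case gives the Proposition stated before Theorem~\ref{th:1}, and matching the combinatorial factors identifies the right-hand side as $\mathcal M_{i,y,k}$.

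Concretely the steps are: (1) rewrite $M_{i,y,k}$ as a shifted-symmetric power-sum operator and record that it lies in $Z(\mathfrak{gl}(A_{i,[Ly]}))$; (2) for a fixed monomial, reorder/enlarge the index sets so that they form a nested chain, controlling the error by regularity; (3) apply Section~\ref{sc:3} to turn $\langle\prod M_{i_m,y_m,k_m}\rangle_\gamma$ into an expectation over $(\mathcal P,\mu_\gamma)$ of a product of functions of successive path members; (4) use the explicit Plancherel transition probabilities and a generating-function computation to get the joint asymptotics of these power sums, extracting the Gaussian fluctuation term; (5) verify the limiting mixed moments obey Wick's formula with covariance $C_{ij}$, and identify the two-point function with $\mathbf E(\mathcal M_{i,y,k}\mathcal M_{j,y',k'})$ by a change of variables $z \leftrightarrow (x(z),y(z))$. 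The main obstacle I expect is step (4) combined with the reordering in step (2): one must control the \emph{joint} asymptotic behavior of shifted power sums attached to overlapping (not merely nested) collections $A_i$, and show that the limiting covariance depends only on the overlap data $\alpha(i,x;j,y)$ and not on finer structure of the sequences — this is where the regularity hypothesis does real work, and where a careful steepest-descent analysis of a multidimensional generating function, uniform in the parameters, is needed. A secondary technical point is justifying that convergence of these unbounded-operator ``averages'' actually reflects convergence of the height-function moments $M_{i,y,k}$ as random variables, i.e. that the map from central elements to functions on $\mathcal P$ is compatible with the moment computations — but this is handled by the identification in Section~\ref{sc:3} together with the Proposition preceding the theorem.
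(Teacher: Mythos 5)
Your step (1) coincides with the reduction the paper itself makes visible (integration by parts turning $M_{i,y,k}$ into the centered shifted power sums $p_{k+1,I}$, i.e.\ the passage from Theorem~\ref{th:1} to the second theorem); the paper is an announcement and contains no further proof, so the rest must be judged on its own merits. The genuine gap is your step (2), on which steps (3)--(5) depend. You propose to ``reorder/enlarge the index sets so that they form a nested chain'' and then evaluate the state as an iterated expectation on a single path space $(\mathcal P,\mu_\gamma)$ via Section~\ref{sc:3}. This cannot work: the identification of $\langle\,\cdot\,\rangle_\gamma$ with an expectation over $\mathcal P$ applies only to the commutative algebra $\mathcal Z$ generated by the centers along one nested chain. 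For a monomial involving $p_{k,I}$ and $p_{l,J}$ with $I,J$ overlapping but non-nested (e.g.\ $A_2=\{2,4,6,\dots\}$ and $A_3=\{3,5,7,\dots\}$ from the paper's own example, or simply $I=\{1,2\}$, $J=\{2,3\}$), the two central elements genuinely fail to commute, there is no common probability space carrying both height functions, and the state of the product is not the expectation of a product of functions of path members. Reordering the factors changes the element by commutators that you have not shown to be negligible, and ``enlarging'' $I$ to a bigger set $I'$ replaces $p_{k,I}$ by a different element whose centered, normalized difference from $p_{k,I}$ is of order one (since $|I'\setminus I|\sim L$), not an error term controlled by regularity. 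Precisely this noncommutative evaluation is the entire content of the theorem --- the cross-covariances involving $\alpha(i,\cdot;j,\cdot)$ arise exactly from the non-nested situation --- so dismissing it as a reduction step leaves the main difficulty unaddressed.

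A workable route has to compute $\langle\,\cdot\,\rangle_\gamma$ on genuinely noncommutative monomials directly, for instance by writing the central elements explicitly in terms of the generators $E_{ij}$, $i,j\in I$ (power sums correspond, up to lower-order terms, to cyclic monomials $\sum E_{i_1 i_2}E_{i_2 i_3}\cdots E_{i_k i_1}$), and then applying the differential-operator realization \eqref{sost} to the explicit exponential character \eqref{char}. Because the character is an exponential of a linear form in the $x_{ii}$, the resulting expressions are explicit polynomials in $\gamma L$ indexed by combinatorial data (how the index sequences of the various factors intersect across the different sets $A_{i_m,[Ly_m]}$), and the CLT with the claimed Wick structure and covariance $C_{ij}$ is extracted from the leading asymptotics of these sums; only there does the regularity hypothesis, through the overlaps $|A_{i,[xL]}\cap A_{j,[yL]}|/L\to\alpha(i,x;j,y)$, enter. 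Your steps (4)--(5) describe plausible bookkeeping once such a computation is available, but as written they rest on a single-chain (commutative) generating-function analysis that cannot see the mixed terms, so the proposal does not yet amount to a proof strategy for the actual statement.
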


Thus, in the $L\to\infty$ limit, the noncommutativity disappears
(limiting algebra $\mathbf A$ is commutative), and yet the random fields $\mathcal H_i$ for different
$i$'s are not independent.

Let $u=L x$. The definition of the height function implies
\begin{equation*}
\frac{d}{d u} H_{A_i} (u, [Ly]) = - \sqrt{\pi}\, \sum_{s=1}^{[Ly]} \delta\left( u - \left(\lambda_s^{(A_{i,[Ly]})} -s + 1/2\right)\right).
\end{equation*}

Define the \emph{shifted power sums}
\begin{equation*}
p_{k,I} = \sum_{i=1}^{|I|} \left(\left(\lambda_i^{(I)} - i +\frac12\right)^k - \left(-i +\frac12\right)^k\right) , \ \ \ \ I \subset \mathbb N.
\end{equation*}
One shows that $p_{k,I} \in \mathbb A(I)$, and hence they correspond to
certain elements of $Z(\mathfrak {gl} (I))$ that we will denote by the same symbol.

Integrating \eqref{moment} by parts shows that $M_{i,y,k}$ can be rewritten as
\begin{multline*}
\frac{L^{-(k+1)} \sqrt{\pi}}{ k+1} \left( \sum_{s=1}^{[Ly]} \left(\lambda_s^{(A_{i,[Ly]})} -s + \frac 12\right)^{k+1}
- \mathbf E \sum_{s=1}^{[Ly]} \left(\lambda_s^{(A_{i,[Ly]})} -s + \frac 12\right)^{k+1} \right) \\ =\frac{L^{-(k+1)} \sqrt{\pi}}{ k+1}
\,\left( p_{k+1,I} - \mathbf E p_{k+1,I}\right).
\end{multline*}
Thus, Theorem \ref{th:1} can be reformulated as follows.

\begin{theorem}
Let $k_1, \dots, k_m \ge 1$ and $I_1, \dots, I_m$ be finite subsets of $\mathbb N$ that
may depend on the large parameter $L$ in such a way that there exist limits
\begin{equation*}
\eta_r=\lim_{L \to \infty} \frac{|I_r|}{L} > 0, \ \ \ \ c_{rs} = \lim_{L \to \infty} \frac{|I_r \cap I_s|}{L}\,.
\end{equation*}
Then as $L \to \infty$, the collection
\begin{equation*}
\left(L^{-k_r}\left( p_{k_r, I_r}- \mathbf E p_{k_r, I_r}\right)\right)_{r=1}^{m}
\end{equation*}
 of elements of\, $\mathcal U (\mathfrak{gl} (\infty))$ converges in the sense of states, cf. \eqref{shod}, to the Gaussian vector
$(\xi_1, \dots, \xi_m)$ with zero mean and covariance
\begin{multline*}
\mathbf {E} \xi_r \xi_s =
\frac{ k_r k_s}{\pi} \oint_{|z|^2 = \frac{\eta_r}{\gamma}; \mathfrak I (z)>0} \oint_{|w|^2 = \frac{\eta_s}{\gamma} ;
\mathfrak I(w)>0}
(x(z))^{k_r-1} (x(w))^{k_s-1} \\ \times
\frac{1}{2 \pi} \ln \left|\frac{c_{rs}/ \gamma - z w}{c_{rs}/ \gamma - z \bar w}\right| \frac{d(x(z))}{dz}
\frac{d(x(w))}{dw} dz dw.
\end{multline*}

\end{theorem}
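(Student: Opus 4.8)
The plan is to reduce the statement to the asymptotics of joint cumulants of the shifted power sums $p_{k,I}$ under the Plancherel measures $P^{\gamma L}_I$, and to compute these cumulants via a contour-integral representation coming from the (known) structure of Plancherel measures. First I would recall that, by the discussion of Section~\ref{sc:3}, the state $\langle\,\cdot\,\rangle_\gamma$ restricted to the commutative subalgebra generated by the centers $Z(\mathfrak{gl}(I_1)),\dots,Z(\mathfrak{gl}(I_m))$ need not be evaluated on one fixed probability space; instead, for a fixed noncommutative monomial $p_{k_{r_1},I_{r_1}}\cdots p_{k_{r_l},I_{r_l}}$ one orders the factors so that $I_{r_1}\supseteq$ is not required, but one uses the tower structure: all the relevant subsets can be embedded in a common $\overline{1,N}$, and the state becomes an honest expectation with respect to the pushforward of $\mu_\gamma$ to the Gelfand--Tsetlin pattern on $\overline{1,N}$. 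Concretely, for each monomial I would pick $N$ large, realize every $I_r$ inside $\overline{1,N}$, and express $\langle p_{k_1,I_1}\cdots p_{k_l,I_l}\rangle_\gamma$ as $\mathbf E\!\left[\prod_r p_{k_r,I_r}\right]$ where the expectation is over the coherent measure; then the whole statement is about joint moments of a family of random variables on one probability space, and by the standard equivalence \eqref{Wick} it suffices to prove (i) $L^{-k_r}(p_{k_r,I_r}-\mathbf E p_{k_r,I_r})\to$ Gaussian (already in \cite{BorFer} for a single $I$) and, crucially, (ii) the covariance has the stated contour-integral form, and (iii) all higher joint cumulants, normalized by $L^{-k_{r_1}-\cdots-k_{r_l}}$, vanish.

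The engine for (ii) and (iii) is the following. The measure $P^{\gamma L}_I$ on partitions $\lambda$ of length $|I|$ is, up to the $\Dim_{|I|}\lambda$ factor, the poissonized Plancherel measure on the symmetric group; the random point configuration $\{\lambda_i-i+\tfrac12\}$ is a determinantal process whose correlation kernel is an explicit discrete Bessel / Christoffel--Darboux kernel depending only on $|I|$ and $\gamma L$. For a single set $I$ this gives, via the standard machinery for linear statistics of determinantal processes, that $p_{k,I}$ is a linear statistic with all cumulants of order $\ge 3$ subleading and a covariance computable by double contour integrals of the generating function of the kernel; after the global rescaling $z\mapsto$ point on the circle $|z|^2=\eta/\gamma$ this is exactly the $C_{ii}$ kernel, i.e.\ the GFF. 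The genuinely new input needed for different $I_r$'s is the joint correlation structure of the configurations on $I_r$ and $I_s$. Here I would use the fact that these configurations live on the \emph{same} Gelfand--Tsetlin pattern sampled from $\mu_\gamma$: the pair $(\lambda^{(I_r)},\lambda^{(I_s)})$ is obtained from the big pattern by restriction, and on the big pattern the relevant joint process (a collection of interlacing arrays) is still determinantal, with an explicit extended kernel. The cross-covariance $\mathbf E(p_{k_r,I_r}-\mathbf E)(p_{k_r,I_s}-\mathbf E)$ is then a double sum against the cross-kernel $K_{I_r,I_s}$, and the only feature of $(I_r,I_s)$ that survives in the limit is $|I_{r}\cap I_{s}|$, which sets the ``time'' separation in the extended kernel; a saddle-point / steepest-descent analysis of the resulting double integral produces $\tfrac{1}{2\pi}\ln\bigl|\tfrac{c_{rs}/\gamma - zw}{c_{rs}/\gamma - z\bar w}\bigr|$, and with the Jacobians $dx(z)/dz$ and the factors $k_r k_s (x(z))^{k_r-1}(x(w))^{k_s-1}$ coming from rewriting $p_{k,I}$ in terms of $M_{i,y,k}$ and integrating by parts, this is precisely the claimed $\mathbf E\xi_r\xi_s$.

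For step (iii), the vanishing of higher joint cumulants, I would again invoke the determinantal structure: for a (possibly extended) determinantal process, the $l$-th cumulant of a sum of linear statistics is an alternating cyclic sum of products of $l$ kernel values integrated over $l$ copies of the configuration space; the power-sum weights $(\lambda_i-i+\tfrac12)^k$ grow like $L^k$ on the relevant range, the configurations have $O(L)$ points, and the kernel, suitably rescaled, is $O(1)$ with oscillatory decay, so the $l$-th cumulant is $O(L^{k_{r_1}+\cdots+k_{r_l}+1})$ only for $l=2$ and is $O(L^{k_{r_1}+\cdots+k_{r_l}})$ (hence negligible after the chosen normalization) for $l\ge 3$ — exactly the Soshnikov-type CLT bound for linear statistics of determinantal processes, now applied to the multi-time extended kernel. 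I would organize this as a lemma stating the extended kernel and its global-limit asymptotics, and then the theorem follows by combining the kernel asymptotics with the cumulant formula.

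The main obstacle I anticipate is step~(ii)/(iii) at the level of the \emph{joint} process: one must produce a single determinantal description of the correlated family $\{\lambda^{(I_r)}\}_r$ inside a common Gelfand--Tsetlin pattern under the one-sided Plancherel measure, with a kernel explicit enough to run steepest descent and to read off that the dependence on $(I_r,I_s)$ collapses to $c_{rs}=\lim|I_r\cap I_s|/L$. Establishing that collapse — i.e.\ that two sets with the same intersection cardinality give, in the limit, the same cross-covariance regardless of \emph{which} coordinates they share — is the crux; it should follow from a symmetry of the Plancherel measure under permuting coordinates together with a continuity/stability estimate for the kernel, but making this rigorous, uniformly in $L$, is where the real work lies. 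Everything else (the single-$I$ GFF limit, Wick's formula, the passage from states to moments, the integration-by-parts identity relating $M_{i,y,k}$ to $p_{k+1,I}$) is either already in the literature or routine.
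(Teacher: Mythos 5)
Your opening reduction is where the argument breaks, and it breaks in a way that removes the engine for everything that follows. You claim that by embedding all the $I_r$ into a common $\overline{1,N}$ the state $\langle p_{k_{r_1},I_{r_1}}\cdots p_{k_{r_l},I_{r_l}}\rangle_\gamma$ becomes an honest expectation of a product of random variables on the Gelfand--Tsetlin path space. That is only true for the commutative subalgebra $\mathcal Z$ attached to one fixed \emph{nested} chain (this is exactly the scope of the identification in Section~\ref{sc:3}). For overlapping but non-nested sets, say $I_r=\{1,2\}$ and $I_s=\{2,3\}$, the elements $p_{k,I_r}\in Z(\mathfrak{gl}(I_r))$ and $p_{k,I_s}\in Z(\mathfrak{gl}(I_s))$ do not commute in $\mathcal U(\mathfrak{gl}(\infty))$; a Gelfand--Tsetlin pattern for $\overline{1,N}$ has no coordinate ``$\lambda^{(I)}$'' for a non-initial segment $I$, because the restriction to $U(I)$ is not scalar on the isotypic components of the chain. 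Hence the pair $(\lambda^{(I_r)},\lambda^{(I_s)})$ is \emph{not} obtained ``from the big pattern by restriction'': there is no joint random variable, no coupling, and no extended determinantal process for the family $\{\lambda^{(I_r)}\}_r$ at finite $L$. Concretely, any classical realization would force $\langle p\,q\,p\,q\rangle_\gamma=\langle p^2q^2\rangle_\gamma$ for $p=p_{k,I_r}$, $q=p_{k,I_s}$, which fails before the limit; the state is defined only algebraically via \eqref{sost} applied to \eqref{char}, and the theorem's content is precisely that the Wick/commutative structure emerges only as $L\to\infty$. Consequently your steps (ii) and (iii) --- the cross-kernel $K_{I_r,I_s}$, the double sum for the cross-covariance, and the Soshnikov-type cumulant bounds for a ``multi-time extended kernel'' --- have no object to act on, and the ``crux'' you flag at the end (collapse of the dependence to $|I_r\cap I_s|$) cannot be reached by permutation symmetry plus kernel continuity, because the joint law whose stability you would estimate does not exist.

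What survives of your plan is the nested case: for a single chain the measures \eqref{prec} are Schur measures (poissonized Schur--Weyl, not poissonized Plancherel of $S_n$ --- the factor $\dim\lambda\,\Dim_{|I|}\lambda$ matters), they are determinantal, and the single-$I$ GFF limit is indeed the content of \cite{BorFer}; also your normalization bookkeeping needs fixing (the covariance of $p_{k_r,I_r},p_{k_s,I_s}$ must be of order $L^{k_r+k_s}$, not $L^{k_r+k_s+1}$, for the limit to be finite). But the theorem's novelty is exactly the non-nested situation, and there the proof has to be run inside $\mathcal U(\mathfrak{gl}(\infty))$: one computes the states of noncommutative monomials directly (e.g.\ via the isomorphism $D_I$ and the explicit exponential character, expanding the $p_{k,I}$ through the generators $g_{ij}$ and controlling the resulting combinatorial sums), and shows that the leading terms organize into perfect matchings with the pairings contributing the stated covariance, while commutators contribute to lower order. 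The present note announces the result without proof, but its framing (``convergence in the sense of states'') makes clear that a reduction to a classical multi-point process is not available; your proposal as written would prove a different, weaker statement.
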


\end{document}